\newtheorem{proposition}{Proposition}
\newtheorem{example}[proposition]{Example}
\newtheorem{lemma}[proposition]{Lemma}
\newtheorem{theorem}[proposition]{Theorem}
\newcommand{\II}{\mathds{I}}
\newcommand{\DD}{\mathcal{D}}
\begin{document}

\title{Extending multivariate sub-quasi-copulas}%

\author[D. Kokol B.]{Damjana Kokol Bukov\v{s}ek}
\address{Damjana Kokol Bukov\v{s}ek, University of Ljubljana, School of Economics and Business, and Institute of Mathematics, Physics and Mechanics, Ljubljana, Slovenia}
\email{damjana.kokol.bukovsek@ef.uni-lj.si}

\author[T. Ko\v sir]{Toma\v{z} Ko\v{s}ir}
\address{Toma\v{z} Ko\v{s}ir, University of Ljubljana, Faculty of Mathematics and Physics, and Institute of Mathematics, Physics and Mechanics, Ljubljana, Slovenia}
\email{tomaz.kosir@fmf.uni-lj.si}

\author[B. Moj\v{s}kerc]{Bla\v{z} Moj\v{s}kerc}
\address{Bla\v{z} Moj\v{s}kerc, University of Ljubljana, School of Economics and Business, and Institute of Mathematics, Physics and Mechanics, Ljubljana, Slovenia}
\email{blaz.mojskerc@ef.uni-lj.si}

\author[M. Omladi\v{c}]{Matja\v{z} Omladi\v{c}}
\address{Matja\v{z} Omladi\v{c}, Institute of Mathematics, Physics and Mechanics, Ljubljana, Slovenia}
\email{Matjaz@Omladic.net}

\subjclass{Primary: 62H05, Secondary: 26B35}%
\keywords{Copula; Quasi-copula; Multivariate; Imprecise copula; Patchwork;}%

\begin{abstract}
  In this paper we introduce patchwork constructions for multivariate quasi-copulas. These results appear to be new since the kind of approach has been limited to either copulas or only bivariate quasi-copulas so far. It seems that the multivariate case is much more involved since we are able to prove that some of the known methods of bivariate constructions cannot be extended to higher dimensions. Our main result is to present the necessary and sufficient conditions both on the patch and the values of it for the desired multivariate quasi-copula to exist. We also give all possible solutions. 
\end{abstract}

\maketitle
\section{Introduction}\label{sec:intro}

Copulas are mathematical objects that capture the dependence structure among random variables. They may be viewed either as distributions with uniform margins or as building blocks of general distributions with given margins. Since they were introduced by Sklar in 1959 \cite{Skla} they have gained a lot of popularity through applications, say, in finance, insurance, and reliability theory. They are used both in probabilistic framework (cf.\ \cite{Nels,DuSe}) as well as in an imprecise setting (cf.\ \cite{MoMiMo,MoMiPeVi,OmSk,OmSt1,OmSt2,PeViMoMi1,PeViMoMi2}). An important concept related to copulas are quasi-copulas \cite{AlNeSc,DeBa,DiSaPlMeKl,GeQuMoRoLaSe,HaMe, JaDeBaDeMe,NeQuMoRoLaUbFl,NeQuMoRoLaUbFl2,Wall} which may be seen as the pointwise infima or suprema of copulas \cite[Section~7.3]{DuSe}. Precise definitions will be given in Section \ref{sec:patch}. 

The interest in quasi-copula theory and its applications is growing fast as can be seen from a recent review by Arias-Garcia et al. \cite{ArGaMeDeBa20} (cf.\ also Arias-Garcia et al. \cite{ArGaMeDeBa17} and Nelsen et al.\ \cite{NeQuMoRoLaUbFl}). With the development of copula theory in the imprecise probability setting the need for new construction methods for quasi-copulas has also increased. 
Montes et al., say, published in \cite{MoMiPeVi} a possible approach to bivariate imprecise copulas. If $\mathcal{C}$ is a nonempty set of copulas, then $\underline{C} = \inf_{C\in\mathcal{C}}\{C\}$ {and} $\overline{C}= \sup_{C\in \mathcal{C}}\{C\}$ are quasi-copulas and the ordered interval $(\underline{C}, \overline{C})$, i.e., the set of all intermediate quasi-copulas, may be called \emph{{an imprecise copula}}. Following the ideas of $p$-boxes it would be nice if the order ideal defined by these quasi-copulas contained a ``true'' copula. However, Omladi\v{c} and Stopar construct in \cite{OmSt1} an imprecise copula $(A,B)$ in this sense, i.e., an appropriate pair of quasi-copulas, such that there is no copula $C$ with $A \leqslant C\leqslant B$ (cf.\ also \cite{atslo,atslo1,OmSt3}). 
Many other needs for constructions of quasi-copulas include the search for the Dedekind-MacNeille completion of the poset (partially ordered set for pointwise order) of copulas. Nelsen and \'{U}beda-Flores showed in a historical paper \cite{NeUbFl} that in the bivariate case this completion is just the class of bivariate quasi-copulas. In the multivariate setting the two authors together with Fern\'{a}ndez-S\'{a}nchez demonstrate in \cite{FeSaNeUbFl} that for dimension $n\geqslant3$ a simple extension of this result to $n$-variate (quasi-)copulas does not work any more. The problem was recently solved by Omladi\v{c} and Stopar in \cite{OmSt5}, and their surprising solution is raising new questions in multivariate quasi-copulas.

So, it is high time to produce patchwork constructions for multivariate quasi-copulas. In copula setting this procedure is as old as the notion of copula itself. We want to obtain a copula from a certain function (called \emph{sub-copula}) defined on a suitable subset of $\II^n$, where $\II=[0,1]$, by extending it to a copula on the whole domain $\II^n$. We can think of sub-copula as a patch and search for copulas that extend their values given there. Indeed, even in the proof of Sklar's theorem one is faced with a version of this problem. With the development of the copula theory this technique has been used in quite a number of situations. For a list of applications and a full answer to it, including a necessary and sufficient conditions on the function and its domain as well as all possible solutions of the problem, an interested reader should consult the paper\ \cite{AmDiDuFe} by de Amo et al. (cf.\ also \cite{AmDiFe}). No need to emphasize that all this is done on the $n$-variate level.

On the other hand, the analogous problem for quasi-copulas is much newer and the known solutions appear to be limited only to the bivariate case, cf.\ the paper \cite{KoBuKoOmSt} by Kokol Bukov\v{s}ek et al.\ and the references given there. It seems that the multivariate case is much more involved since we are able to prove in this paper that some of the known methods of bivariate constructions cannot be extended to higher dimensions (Section \ref{sec:patch}). Nevertheless, our main result is to present the necessary and sufficient conditions both on the patch and the values of it for the desired multivariate quasi-copula to exist. We also give all the possible solutions (Section \ref{sec:ext}).

\section{Some partial results}\label{sec:patch}

We denote the unit interval by $\II:=[0,1]$ and the unit $n$-box by $\II^n$. Also, we denote by $R=\prod_{i=1}^{n} [a_i,b_i] \subseteq \II^n$ an arbitrary $n$-dimensional rectangle, i.e., a general $n$-box.
For a real function the term strictly increasing will be used in the sense increasing and increasing will be used instead of the more usual nondecreasing. Analogously for strictly decreasing and decreasing.

Consider subsets $\delta_1,\delta_2,\ldots,\delta_n$ of $\II$ that all contain $0$ and $1$ and let $\DD=\prod_{k=1}^{n}\delta_k$. A function $Q: \DD \to \II$ is called a \emph{sub-quasi-copula} (cf.\ \cite{QuMoSe}) if
\emph{\begin{enumerate}[(i)]
  \item $Q$ is grounded and 1 is the {neutral element} of $Q$;
  \item $Q$ is increasing in each of its arguments on $\DD$;
  \item $Q$ satisfies 1-Lipschitz condition on $\DD$ in each of its arguments
    .
\end{enumerate}}
Observe that in the case that $\delta_k=\II$ for all $k=1,2,\ldots,n$ so that $\DD=\II^n$ this is the usual definition of a quasi-copula.
Also, Condition \emph{(iii)} implies that $Q$ satisfies the Cauchy condition for any Cauchy sequence in $\DD$. Since we are only interested in continuous extensions of $Q$ we may and do assume that $\delta_k$ are closed subsets of $\II$ for all $k=1,2,\ldots,n$.

Assuming that the function is given 
on the outside of a given rectangle $R$ we want to extend it to $R\subseteq\II^n$ using an appropriate function to get a quasi-copula. The kind of procedure, sometimes called \emph{patchwork}, was first introduced in \cite{DeBaDeMe} for a class of functions containing copulas (cf.\ also \cite{DuFeSaQuMoUbFl} and \cite[p.~115]{DuSe}, and the references given there).
The main tool needed in the sequel, a multivariate extension of \cite[Theorem~1]{KoBuKoOmSt}, will be obtained via an inductive argument. We assume that a function is given on $\partial R$, i.e., on all $2n$ $(n-1)$-faces of $R$; in the beginning of this section we will call them simply facets. They are given in pairs: For each $k\in[n]$ we have the face defined by $x_k=a_k$ respectively  $x_k=b_k$, which we call the lower and upper $k$-th face and denote them by $R_k$ and $R_k'$.
We assume (inductively) that the values are given via $(n-1)$-variate functions: $F_k(\mathbf{x})$ respectively $F'_{k}(\mathbf{x}')$
\ for $\mathbf{x}\in R_k$ respectively $\mathbf{x}'\in R_k'$ for all $k\in[n]$.
By abuse of notation we will identify $R_k$ with $R_k'$ by viewing an $\mathbf{x}$ from the first one as corresponding to an $\mathbf{x}'$ from the latter one if they are equal in all coordinates except the $k$-th one.

Following the bivariate approach \cite[Section~2]{KoBuKoOmSt} we now introduce Conditions \textbf{PB} on a certain function defined on the $(k-1)$-faces of $R$. We will follow the notation above and denote the function by $F$ on lower faces and by $F'$ on the upper ones, with additional subscript identifying the actual face.

\textbf{Conditions PB} (\emph{Properties of the boundary functions}). The functions $F_k,F_k'$ for $k\in[n]$ have to satisfy the following conditions.
  \emph{\begin{enumerate}[(i)]
    \item The values of these functions are uniquely determined at $(n-2)$-faces of $R$ at which faces $R_k$ and $R_k'$ intersect.\footnote{This means that for every pair $j,k\in [n], j\neq k$, we have four equalities of the type, say, function $F_j(\mathbf{x})$ at $\mathbf{x}\in R_j$ with $x_k=a_k$ equals $F_k(\mathbf{x})$ at $\mathbf{x}\in R_k$ with $x_j=a_j$.}
    \item These functions are increasing and 1-Lipschitz in each variable.
    \item For all $k\in[n]$ and all $\mathbf{x}\in R_k$ we have $0\leqslant F_k'(\mathbf{x})-F_k(\mathbf{x}) \leqslant b_k-a_k$. {
    \item For any $k\in[n]$ the volume of the rectangle $R_{\mathbf{x}} \subseteq R$ defined by the points $\mathbf{a}$ and $\mathbf{x}\in R_k'$ is a monotone and non-constant function in each argument of $\mathbf{x}$.\footnote{In case $n=2$ this condition reduces exactly to the additional assumption immediately following \textbf{Conditions PB} of \cite{KoBuKoOmSt}. This assumption will be needed in the proof of 
        Lemma \ref{le:additive}\emph{(d)} and in Example \ref{ex:old},
        and is not crucial in the rest of the paper.}}
  \end{enumerate}}

The multivariate case is substantially more involved than the bivariate one, so that we need to work in two steps.

\vskip5mm
\begin{center}
  {\textsc Step I}
\end{center}
\vskip5mm

On the first step we assume that $R=\II^n$ and we are searching for a quasi-copula. So, in Conditions \textbf{PB} we assume functions $F_k$ to be zero and functions $F'_k =C_k(x_1,x_2,\ldots,x_{\hat{k}},\ldots,x_n)$, where notation $\hat{k}$ means that index $k$ is not present in this particular counting. Here is the desired quasi-copula.

\begin{proposition}\label{pr:stepI}
  Let Conditions \textbf{PB} be satisfied. Then, there exists a quasi-copula $Q$ matching functions $F_k,F'_k$ on the boundary of $\II^n$. {Moreover,
  \begin{enumerate}[(a)]
    \item The upper bound of all these solutions equals
    \[
       \overline{Q}(x_1,x_2,\ldots,x_n)= \]
  \[
       \min\{C_1(x_2,\ldots,x_n),C_2(x_1,x_3,\ldots,x_n), \ldots, C_n(x_1,\ldots,x_{n-1}) \},
  \]
    \item the lower bound equals
  \[
       \underline{Q}(x_1,x_2,\ldots,x_n)= \]
  \[
       \max\{W(x_1,C_1(x_2,\ldots,x_n)),W(x_2,C_2(x_1,x_3,\ldots,x_n)), \ldots, W(x_n,C_n(x_1,\ldots,x_{n-1})) \},
  \]
    \item and every function $Q$ satisfying Condition \textbf{PB}\emph{(ii)} and $\underline{Q} \leqslant Q\leqslant \overline{Q}$ solves the problem.
  \end{enumerate}}
\end{proposition}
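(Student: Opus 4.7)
My plan splits into three steps: (1) show that the two extremal candidates $\overline{Q}$ and $\underline{Q}$ are quasi-copulas matching the prescribed boundary; (2) derive the pointwise sandwich $\underline{Q} \leqslant \overline{Q}$; (3) deduce part (c) by squeezing. Monotonicity and the 1-Lipschitz condition in each variable are both preserved by pointwise $\min$, pointwise $\max$, and by composition with $W$ (itself coordinate-wise increasing and 1-Lipschitz), so combined with PB(ii) applied to the $C_k$'s this settles axioms (ii) and (iii) of a quasi-copula for both $\overline{Q}$ and $\underline{Q}$ in one stroke. Groundedness and boundary matching on the lower faces $R_k = \{x_k = 0\}$ also come for free: PB(i) pairs $F_k \equiv 0$ with $F_j'|_{x_k=0}$, forcing $C_j|_{x_k=0}=0$ for $j \neq k$, while the $k$-th term of $\underline{Q}$ is $W(0, C_k) = 0$; hence $\overline{Q} = \underline{Q} = 0$ on $R_k$.

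The heart of step~(1) is boundary matching on the upper faces $R_k' = \{x_k = 1\}$, which together with the $(n-1)$-variate quasi-copula axioms for each $C_k$ gives the neutral-element axiom for $Q$ along the ``$1$-axes''. The key tool is the consistency identity
\[
 C_j|_{x_k=1} \;=\; C_k|_{x_j=1} \qquad (j \neq k),
\]
which reads off directly from PB(i) on the $(n-2)$-face $R_j' \cap R_k'$; both sides are functions of the $n-2$ coordinates $x_i$ with $i \notin \{j,k\}$. Combined with monotonicity of $C_k$ in $x_j$, this yields $C_j|_{x_k=1} = C_k|_{x_j=1} \geqslant C_k$ on $R_k'$, so the minimum in $\overline{Q}$ is attained at $j = k$; combined with 1-Lipschitz continuity of $C_k$ in $x_j$, it yields $x_j + C_j|_{x_k=1} - 1 \leqslant C_k$ on $R_k'$, so the maximum in $\underline{Q}$ is attained at $j = k$.

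For step~(2), at a general point of $\II^n$ and for $j \neq k$ I would chain monotonicity of $C_j$ in $x_k$, the consistency identity, and 1-Lipschitz continuity of $C_k$ in $x_j$:
\[
 C_j \;\leqslant\; C_j|_{x_k=1} \;=\; C_k|_{x_j=1} \;\leqslant\; C_k + (1 - x_j).
\]
This rearranges to $W(x_j, C_j) \leqslant C_k$; the case $j = k$ is trivial since $W(x_k, C_k) \leqslant C_k$. Taking the max over $j$ on the left and the min over $k$ on the right yields $\underline{Q} \leqslant \overline{Q}$.

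For step~(3), any $Q$ that is increasing and 1-Lipschitz in each variable and satisfies $\underline{Q} \leqslant Q \leqslant \overline{Q}$ inherits groundedness, the boundary values $C_k$ on $R_k'$, and the neutral-element axiom on the $1$-axes from the squeeze $\underline{Q} = \overline{Q}$ on the entire boundary of $\II^n$, so it is automatically a quasi-copula matching the data. The main obstacle is the boundary-matching argument on the upper faces, where PB(i) and PB(ii) must be combined in a specific order and one must keep careful track of which coordinate is being suppressed in each $C_k$; in the bivariate setting of \cite{KoBuKoOmSt} this bookkeeping is trivial, but in the multivariate case there are $\binom{n}{2}$ intersections $R_j' \cap R_k'$ to handle uniformly.
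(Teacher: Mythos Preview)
Your proof is correct and follows essentially the same route as the paper's: both hinge on the consistency identity $C_j|_{x_k=1}=C_k|_{x_j=1}$ coming from \textbf{PB}\emph{(i)}, combined with the 1-Lipschitz condition on $C_k$ to show that on the upper face $R_k'$ the $k$-th term dominates in $\underline{Q}$ (and is dominated in $\overline{Q}$). Your write-up is somewhat more explicit than the paper's---you separate out the sandwich $\underline{Q}\leqslant\overline{Q}$ as its own step and spell out the squeezing argument for part \emph{(c)}---but the underlying idea is identical.
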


{\begin{proof}
First we show \emph{(b)}. Clearly, each of the terms in this maximum is a quasi-copula, so that also $\underline{Q}$ is a quasi-copula. Furthermore, we need to see that the boundary conditions on the faces are satisfied as well. The faces containing a zero are easy again. So, assume first $x_1=1$. In this case the first term equals $C_1(x_2,\ldots,x_n)$. The behavior of any other term can be seen from the behavior of the term at $k=2$ which is equal to $W(x_2,C_2(1,x_3,\ldots,x_n)) = W(x_2,C_1(1,x_3,\ldots,x_n)) \leqslant C_1(x_2,\ldots,x_n)$, because $C_1(1,x_3,\ldots,x_n)- C_1(x_2,\ldots,x_n) \leqslant 1-x_2$ by the 1-Lipschitz condition. Assertion \emph{(a)} goes similarly, perhaps even somewhat easier. These two facts imply the third one in a straightforward way.
\end{proof}
}


\vskip5mm
\begin{center}
  {\textsc Step II}
\end{center}
\vskip5mm

We now start with the general problem. Let the functions $F_k,F'_k$ satisfy Conditions \textbf{PB} for $k\in[n]$. We want to find a patch of a quasi-copula that will satisfy these conditions.
By abuse of notation we first introduce formally a function $F$ of $n$ variables from $\partial R$ such that its value at $x_k=a_k$ equals $F_k$ and such that its value at $x_k=b_k$ equals $F_{k'}$ for $k\in[n]$. Since we will always need this function only at all points that belong to the boundary $\partial R$, there will be no ambiguity in this notation even if such function defined on all $R$ does not exist. For an arbitrary $\mathbf{x}\in R$ and $z\in\{-1,0,1\}^{[n]}$ let $\mathbf{x}^z$ be the $n$-tuple whose $k$-th coordinate is equal to $( \mathbf{x}^z)_k= \left\{
                                            \begin{array}{ll}
                                              b_k, & \hbox{if $z(k)=1$;} \\
                                              x_k, & \hbox{if $z(k)=0$;} \\
                                              a_k, & \hbox{if $z(k)=-1$;}
                                            \end{array}
                                          \right.$ for $k\in[n]$. Here, $z(k)$ denotes the $k$-th coordinate of the $n$-tuple $z$; notations s.a.\ $z_k$ will be reserved for particular $n$-tuples. {Because of the requirement above that $F$ be defined on $\partial R$ only, the value $F(\mathbf{x}^z)$ will always be defined as soon as we insist that $z\neq0$.} We first introduce the additive part of our patch
\[
    A(x_1,x_2,\cdots,x_n)=\sum_{0\neq z\in\{-1,0\}^{[n]}} (-1)^{1+\sum z} F(\mathbf{x}^z).{\quad\quad\mbox{(A)}}
\]
which matches the initial conditions on the faces containing the ``$0$'' point $\mathbf{a}= (a_1,\ldots,a_n)$ as shown in the following lemma. We also need the function matching the initial conditions on the faces containing the ``$1$'' point $\mathbf{b}= (b_1,\ldots,b_n)$
\[
    B(x_1,x_2,\cdots,x_n)=\sum_{0\neq z\in\{0,1\}^{[n]}} (-1)^{1+\sum z} F(\mathbf{x}^z) 
{\quad\quad\mbox{(B)}}
\]
and their difference $G(\mathbf{x})=B(\mathbf{x})-A(\mathbf{x})$.

\begin{lemma}\label{le:additive} Denote by $V$ the volume of rectangle $R$, suppose that $V\ne0$, and let $k\in[n]$ be arbitrary.
  \begin{enumerate}[(a)]
    \item Let $z_k'$ be an $n$-tuple of indices with $1$ in the $k$-th position and zeros elsewhere. Then $B(\mathbf{x}^{z_k'})= F'_{k}(x_1, \ldots, x_{\hat{k}},\ldots,x_n)$, $B(\mathbf{b})=F(\mathbf{b})$, and $B(\mathbf{a})=F(\mathbf{a})-(-1)^nV$.
    \item Let $z_k(=-z_k')$ be an $n$-tuple of indices with $-1$ in the $k$-th position and zeros elsewhere. Then $A(\mathbf{x}^{z_k})= F_k(x_1, \ldots, x_{\hat{k}},\ldots, x_n)$, $A(\mathbf{a})=F(\mathbf{a})$, and $A(\mathbf{b})=F(\mathbf{b})-V$.
    \item Let $\hat{z}_k$ be an $n$-tuple of indices with zero in the $k$-th position and ones elsewhere. Then for $M_k(x_k)= G(\mathbf{x}^{\hat{z}_k})$ we have $M_k(a_k)=0$ and $M_k(b_k)=V$.{
    \item Functions $\dfrac{M_k(x_k)}{V}$ are increasing and are sending interval $[a_k,b_k]$ to $[0,1]$.}
  \end{enumerate}
\end{lemma}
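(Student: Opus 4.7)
The plan is to exploit telescoping cancellations in the inclusion--exclusion sums defining $A$ and $B$, together with the identity $\sum_{z\in\{0,1\}^n}(-1)^{n-\sum z}F(\mathbf{a}^z)=V$ identifying the alternating corner sum of $F$ over $R$ with the volume $V$. For (a), I would evaluate $B(\mathbf{x}^{z_k'})$ by splitting the sum over $z\in\{0,1\}^{[n]}\setminus\{0\}$ according to $z(k)\in\{0,1\}$: since $\mathbf{x}^{z_k'}$ already carries $b_k$ at position $k$, both choices give the same $k$-th coordinate, so $F((\mathbf{x}^{z_k'})^z)$ depends only on $w:=z|_{[n]\setminus\{k\}}$ and is a value of $F'_k$ at the obvious $(n-1)$-tuple. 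The sign $(-1)^{1+\sum z}$ flips between $z(k)=0$ and $z(k)=1$, so all $w\ne 0$ pairs cancel and only the $w=0$, $z(k)=1$ term survives, yielding $F'_k(x_1,\dots,\hat{x}_k,\dots,x_n)$. The claim $B(\mathbf{b})=F(\mathbf{b})$ follows because $\mathbf{b}^z\equiv\mathbf{b}$; the prefactor $\sum_{0\ne z}(-1)^{1+\sum z}$ equals $1$ as an immediate consequence of $\sum_z(-1)^{\sum z}=0$. For $B(\mathbf{a})$, the points $\mathbf{a}^z$ sweep the corners of $R$, and adjoining the missing $z=0$ term rewrites $B(\mathbf{a})=F(\mathbf{a})-\sum_{z\in\{0,1\}^n}(-1)^{\sum z}F(\mathbf{a}^z)=F(\mathbf{a})-(-1)^nV$.

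Part (b) is the mirror image of (a) under the involution $z\mapsto -z$, $\mathbf{a}\leftrightarrow\mathbf{b}$, and requires no new idea. For (c), I would apply (a) and (b) at the point $\mathbf{p}:=(b_1,\dots,a_k,\dots,b_n)$: this $\mathbf{p}$ lies on $R_k$ and on every $R'_j$ with $j\ne k$, so (b) gives $A(\mathbf{p})=F_k(b_1,\dots,\hat{b}_k,\dots,b_n)$ while (a) with any such $j$ gives $B(\mathbf{p})=F'_j(\dots)$; both equal $F(\mathbf{p})$ by the consistency Condition \textbf{PB}\emph{(i)}, hence $M_k(a_k)=G(\mathbf{p})=0$. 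The equation $M_k(b_k)=G(\mathbf{b})=F(\mathbf{b})-(F(\mathbf{b})-V)=V$ then follows from (a) and (b) at $\mathbf{x}=\mathbf{b}$.

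For (d), the crucial step is the identification of $M_k(x_k)=G(\mathbf{x}^{\hat{z}_k})$ with the alternating corner sum of $F$ over the subrectangle $R_{x_k}\subseteq R$ whose $k$-th edge has been shortened from $[a_k,b_k]$ to $[a_k,x_k]$; I would establish it by expanding $B(\mathbf{x}^{\hat{z}_k})$ and $A(\mathbf{x}^{\hat{z}_k})$ term by term and observing that the difference reorganizes into the $2^n$-term inclusion--exclusion sum over the corners of $R_{x_k}$. Because $\mathbf{x}^{\hat{z}_k}\in R'_{k'}$ for every $k'\ne k$, Condition \textbf{PB}\emph{(iv)} applied with such a $k'$ forces this corner sum to be monotone and non-constant in $x_k$; combined with the endpoint values $M_k(a_k)=0$ and $M_k(b_k)=V$ from (c) and $V\ne 0$, this yields that $M_k(x_k)/V$ is increasing and maps $[a_k,b_k]$ onto $[0,1]$. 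The principal obstacle is the sign bookkeeping in the inclusion--exclusion sums---particularly the two identifications, of the alternating corner sum with $V$ and of $G(\mathbf{x}^{\hat{z}_k})$ with the corner sum over $R_{x_k}$---after which every remaining claim reduces to telescoping or direct substitution.
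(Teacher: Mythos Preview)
Your proposal is correct and follows essentially the same strategy as the paper: the same splitting of the defining sums of $A$ and $B$ by the value of $z(k)$ and pairwise cancellation in parts \emph{(a)} and \emph{(b)}, and the same identification in \emph{(d)} of $M_k(x_k)$ with the $F$-volume of the sub-box spanned by $\mathbf{a}$ and $\mathbf{x}^{\hat z_k}$, followed by an appeal to Condition~\textbf{PB}\emph{(iv)} and the endpoint values from \emph{(c)}.

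The one noteworthy difference is your treatment of the lower endpoint in \emph{(c)}. The paper re-expands $B(\mathbf{a}^{\hat z_k})$ and $A(\mathbf{a}^{\hat z_k})$ and performs the telescoping cancellation a second time at that specific point to see that each sum equals $F(\mathbf{a}^{\hat z_k})$. You instead observe directly that $\mathbf{p}:=\mathbf{a}^{\hat z_k}$ can be written both as $\mathbf{y}^{z_k}$ (so \emph{(b)} gives $A(\mathbf{p})=F(\mathbf{p})$) and as $\mathbf{y}^{z_j'}$ for any $j\ne k$ (so \emph{(a)} gives $B(\mathbf{p})=F(\mathbf{p})$), hence $G(\mathbf{p})=0$. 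This is a genuine, if small, shortcut: it eliminates a repeated computation by recognizing that the already-proved boundary identities in \emph{(a)} and \emph{(b)} apply at the corner $\mathbf{p}$. The reference to Condition~\textbf{PB}\emph{(i)} is not strictly needed here---it is just the well-definedness of the formal boundary function $F$ on face intersections---but it does no harm.
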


\begin{proof}
  For every $k\in[n]$ denote by $\mathcal{N}_k$ the set of all subsets of $[n]$ that do not contain $k$ and by $\mathcal{N}'_k$ the set of all subsets of $[n]$ that do contain $k$. There is a natural bijection 
$\mathcal{N}_k\longrightarrow \mathcal{N}'_k,
M\mapsto M'= M\cup\{k\}$. The two sets are disjoint and their union equals $2^{[n]}$.
\emph{(a)} Using definition (B) we have
\[
    \begin{split}
       B(\mathbf{x}^{z_k'}) & = \sum_{\substack{z\in\{0,1\}^{[n]}\\0\neq z,z(k)=0}} (-1)^{1+\sum z} F\left( (\mathbf{x}^{z_k'})^{z} \right) + \sum_{\substack{z\in\{0,1\}^{[n]}\\z(k)\neq0}} (-1)^{1+\sum z} F\left( \mathbf{x}^{z}  \right).
    \end{split}
\]
Observe that the $n$-tuples $z$ of the left hand sum have supports in $\mathcal{N}_k$ and the ones of the right hand sum have supports in $\mathcal{N}'_k$. Consider a term of the left hand sum and the corresponding $z$, and denote by $M$ its support, i.e., the nonempty set of all indices at which it is nonzero. Clearly, $M\in\mathcal{N}_k$. Then, $(\mathbf{x}^{z_k'})^{z}=\mathbf{x}^{\tilde{z}}$, where $\tilde{z}$ is exactly the $n$-tuple whose value is $1$ on $M'\in\mathcal{N}'_k$ and zero elsewhere. When $z$ runs through all $n$-tuples of the left hand side sum, $\tilde{z}$ runs through all the $n$-tuples of the right hand side sum except for the term corresponding to the empty support. Observe that the term corresponding to $\tilde{z}$ has exactly the opposite sign than the term corresponding to $z$, so that they sum up to zero. Since $\emptyset'=\{k\}$ we conclude
\[
    \begin{split}
        B(\mathbf{x}^{z_k'}) & = F(\mathbf{x}^{z_{k}'})=F'_{k}(x_1, \ldots, x_{\hat{k}},\ldots,x_n)
    \end{split}
\]
as desired. A simple computation reveals that
\[
    V=\sum_{z\in\{0,1\}^{[n]}}(-1)^{n-\sum z}F(\mathbf{a}^z),
\quad\mbox{and}\quad
    B(\mathbf{b})=\sum_{0\neq z\in\{0,1\}^{[n]}} (-1)^{1+\sum z} F(\mathbf{b}^z)= F(\mathbf{b}),
\]
since $F(\mathbf{b}^z)= F(\mathbf{b})$ and the sum of the corresponding factors equals $1$. Finally,
\[
    B(\mathbf{a})=F(\mathbf{a})+\sum_{0\neq z\in\{0,1\}^{[n]}} (-1)^{1+\sum z} F(\mathbf{a}^z)=\sum_{z\in\{0,1\}^{[n]}} (-1)^{1+\sum z} F(\mathbf{a}^z)
\]
which equals $(-1)^{n-1}V$ by the above. \emph{(b)} This time we use definition (A) to get
\[
    A(\mathbf{x}^{z_k})=\sum_{\substack{z\in\{-1,0\}^{[n]}\\0\neq z,z(k)=0}} (-1)^{1+\sum z} F\left((\mathbf{x}^{z_k})^z\right) + \sum_{\substack{z\in\{-1,0\}^{[n]}\\z(k)\neq0}} (-1)^{1+\sum z} F\left(\mathbf{x}^z\right)
\]
Following the considerations as in \emph{(a)} we observe that a given $n$-tuple $z$ of the
left hand sum has its support $\emptyset\neq M\in\mathcal{N}_k$. So, $(\mathbf{x}^{z_k})^{z} =\mathbf{x}^{\tilde{z}}$, where $\tilde{z}$ is exactly the $n$-tuple whose value is $-1$ on $M'\in\mathcal{N}'_k$ and zero elsewhere. When $z$ runs through $n$-tuples of the kind, $\tilde{z}$ runs through all the $n$-tuples of the right hand side sum except for the term corresponding to the empty support. Observe that the term of $\tilde{z}$ is exactly the opposite to the term of $z$, so that they sum up to zero. Consequently
\[
    \begin{split}
        A(\mathbf{x}^{z_k}) & = F(\mathbf{x}^{z_{k}})=F_{k}(x_1, \ldots, x_{\hat{k}},\ldots,x_n)
    \end{split}
\]
as desired. Through a simple computation we learn that
\[
    V=\sum_{z\in\{-1,0\}^{[n]}}(-1)^{n+\sum z}F(\mathbf{b}^z),
\quad\mbox{and}\quad
    A(\mathbf{a})=\sum_{0\neq z\in\{-1,0\}^{[n]}} (-1)^{1+\sum z} F(\mathbf{a}^z)= F(\mathbf{a}),
\]
since $F(\mathbf{a}^z)= F(\mathbf{a})$ and the sum of the corresponding factors equals $1$. Finally,
\[
    A(\mathbf{b})-F(\mathbf{b}) = \sum_{z\in\{-1,0\}^{[n]} } (-1)^{1+\sum z} F(\mathbf{b}^z)= 
    -V.
\]
\emph{(c)}
At the upper end we have  $M_k(b_k)= G(\mathbf{b}^{\hat{z}_k})=G(\mathbf{b})=B(\mathbf{b})- A(\mathbf{b})=F(\mathbf{b})-F(\mathbf{b})+V=V$,
where we used \emph{(a)} and \emph{(b)}. At the lower end we obtain first that $M_k(a_k)= G(\mathbf{a}^{\hat{z}_k})=B(\mathbf{a}^{\hat{z}_k})- A(\mathbf{a}^{\hat{z}_k})$, so that
\[
    M_k(a_k)= \sum_{0\neq z\in\{0,1\}^{[n]}} (-1)^{1+\sum z} F\left(\left(\mathbf{a}^{\hat{z}_k}\right)^z\right)- \sum_{0\neq z\in\{-1,0\}^{[n]}} (-1)^{1+\sum z} F\left(\left(\mathbf{a}^{\hat{z}_k}\right)^z\right).
\]
Now, recall the methods of the proof of \emph{(a)}. The first one of these two sums can be written in two parts depending on whether $z(k)=0$ or $z(k)=1$. Each term in the second part has a corresponding term in the first one of the same value and the opposite sign, except for $z=z_k$, so that the first sum equals $F\left(\mathbf{a}^{\hat{z}_k}\right)$. In an analogous way we conclude that the second sum equals $F\left(\mathbf{a}^{\hat{z}_k}\right)$, thus proving the desired.

{
\emph{(d)}
 For $x_k\in(a_k,b_k)$ we have $M_k(x_k)= G(\mathbf{x}^{\hat{z}_k})=B(\mathbf{x}^{\hat{z}_k})- A(\mathbf{x}^{\hat{z}_k})$, so that
\[
    \begin{split}
       M_k(x_k) & = \sum_{0\neq z\in\{0,1\}^{[n]}} (-1)^{1+\sum z} F\left(\left(\mathbf{x}^{\hat{z}_k}\right)^z\right)- \sum_{0\neq z\in\{-1,0\}^{[n]}} (-1)^{1+\sum z} F\left(\left(\mathbf{x}^{\hat{z}_k}\right)^z\right) \\
         & = \sum_{\substack{z\in\{0,1\}^{[n]}\\z_k=1}} (-1)^{1+\sum z} F\left(\mathbf{b}\right)
         + \sum_{\substack{0\neq z\in\{0,1\}^{[n]}\\z_k=0}}(-1)^{1+\sum z} F(\mathbf{x}^{\hat{z}_k}) \\
         & - \sum_{\substack{z\in\{-1,0\}^{[n]}\\z_k=-1}}(-1)^{1+\sum z} F\left(\left(\mathbf{x}^{\hat{z}_k}\right)^z\right)
         - \sum_{\substack{0\neq z\in\{-1,0\}^{[n]}\\z_k=0}} (-1)^{1+\sum z} F\left(\left(\mathbf{x}^{\hat{z}_k}\right)^z\right).
    \end{split}
\]
The first two sums above add up to $F(\mathbf{x}^{\hat{z}_k})$ after observing that in the first sum a constant is multiplied by the same number of positive and negative signs which add up to zero, and in the second sum the number of positive signs is greater by one so that they add up to $1$. }{
For each $z$ in the third sum we have $\left(\mathbf{x}^{\hat{z}_k}\right)^z=\mathbf{b}^z$, so that it adds up to constant
\[
    - \sum_{\substack{z\in\{-1,0\}^{[n]}\\z_k=-1}}(-1)^{1+\sum z} F(\mathbf{b}^z).
\]
Combine these observations into
\[
    \begin{split}
       M_k(x_k) & = \sum_{\substack{z\in\{-1,0\}^{[n]}\\z_k=-1}}(-1)^{\sum z} F(\mathbf{b}^z)  + F(\mathbf{x}^{\hat{z}_k}) \\
         & + \sum_{\substack{0\neq z\in\{-1,0\}^{[n]}\\z_k=0}} (-1)^{\sum z} F(\left(\mathbf{x}^{\hat{z}_k}\right)^z).
    \end{split}
\]
Now, on the right hand side of the last expression above we get exactly the volume of the rectangle determined by points $\mathbf{a}$ and $\mathbf{x}^{\hat{z}_k}$. By Condition \textbf{PB}\emph{(iv)} this volume is a non-constant monotone function of $x_k$ and the desired conclusion follows by \emph{(c)}.}
\end{proof}
{
Observe that functions $G_k(x_1, \ldots, x_{\hat{k}},\ldots,x_n)= G(\mathbf{x}^{z_k'})$ for $k\in[n]$ are exactly the volumes of rectangles $R_\mathbf{x}\subseteq R$ defined by the points $\mathbf{a}, \mathbf{x}\in R$. These volumes form, by Condition \textbf{PB}\emph{(iv)}, a non-constant function of $\mathbf{x}$ monotone in each of its coordinates thus constituting a quasi-distribution in the sense of \cite[Section~3]{OmSt4} (after being divided by $V$) with marginal distributions $\dfrac{M_j(x_j)}{V}$ for $j\in[n],j\ne k$. Denote by $Q_k$ the quasi-copulas for $k\in[n]$ obtained by the Sklar type theorem of \cite[Theorem 8]{OmSt4}. Finally, let $Q$ be the quasi-copula obtained from quasi-copulas $Q_k$ for $k\in[n]$ by Proposition \ref{pr:stepI}.}

{
We are studying a patch of quasi-copula type that is equal to $A$ on $(n-1)$-faces containing point $\mathbf{a}$ and is equal to $B$ on $(n-1)$-faces containing point $\mathbf{b}$. Let the univariate margins $M_k$ for $k\in[n]$ be defined as above. Then, under the condition that the volume $V$ of the rectangle $R$ is nonzero (actually this follows from \textbf{Condition PB}\emph{(iv)}), one might conjecture that the desired patch is equal to
\begin{equation*}\label{eq:proti}
  P(x_1,x_2,\cdots,x_n) = A(x_1,x_2,\ldots,x_n)+ VQ\left(\dfrac{M_1(x_1)}{V} ,\cdots,\dfrac{M_n(x_n)}{V} \right),
\end{equation*}
for any quasi-copula $Q$. For $n=2$ this is true by \cite[Theorem 1]{KoBuKoOmSt}. However, for $n\geqslant3$ this does not hold in general as the following example exhibits. Nevertheless we will be able to solve the proposed problem in higher dimensions as well using independent methods and only Conditions \textbf{PB}\emph{(i)}, \emph{(ii)} and \emph{(iii)}.

\begin{example}\label{ex:old}
\end{example}
We first divide the unit square into $9$ equal squares. Denote $\mathcal{M}=\{(x,y); \dfrac{1}{3}\leqslant x,y\leqslant\dfrac{2}{3}\}$ and let $\mathcal{C}$ be the union of the corners $\{(x,y); 0\leqslant x\leqslant \dfrac{1}{3}, \dfrac{2}{3} \leqslant y\leqslant1\}$, $\{(x,y); 0\leqslant x,y\leqslant\dfrac{1}{3}\}$, $\{(x,y); \dfrac{2}{3} \leqslant x,y\leqslant1\}$, and $\{(x,y); \dfrac{2}{3} \leqslant x\leqslant1, 0\leqslant y\leqslant \dfrac{1}{3}\}$. Furthermore, let $\mathcal{D}$ be the union of the four squares not yet taken into account so far. Define a bivariate quasi-copula $D(x,y)$ with the density function
\[
   d(x,y)=\begin{cases}
            -3, & \mbox{on } \mathcal{M}, \\
            0, & \mbox{on } \mathcal{C}, \\
            3, & \mbox{on } \mathcal{D},
          \end{cases}
\]
and a $3$-dimensional quasi-copula $F(x,y,z)=D(x,y)z$. Next, we consider 3-dimensional patch $Q$ on the rectangle $R=\left[\dfrac{1}{3},\dfrac{2}{3}\right]^3$. We have on the boundary of $R$
\[
   \begin{split}
      F\left(\dfrac{1}{3},y,z\right)=\left(y-\dfrac{1}{3}\right)z,\quad  & F\left(\dfrac{2}{3},y,z\right)=\dfrac{1}{3}z, \\
      F\left(x,\dfrac{1}{3},z\right)=\left(x-\dfrac{1}{3}\right)z,\quad  & F\left(x,\dfrac{2}{3},z\right)=\dfrac{1}{3}z, \\
      F\left(x,y,\dfrac{1}{3}\right)=\dfrac{2}{3}(x+y)-\left(\dfrac{1}{3}+xy\right),\quad  & F\left(x,y,\dfrac{2}{3}\right)=\dfrac{4}{3}(x+y)-\left(\dfrac{2}{3}+2xy\right).
   \end{split}
\]
After we insert these values into functions $A$, $B$, and $G$ given above we obtain
\[
   \begin{split}
      A(x,y,z)  & = yz +xz-xy +\dfrac{1}{3}(x+y-2z)-\dfrac{1}{9}, \\
      B(x,y,z)  & = -2xy +\dfrac{1}{3}(4x+4y+z)-\dfrac{8}{9},\ \mbox{and} \\
      G(x,y,z)  & = x+y+z-xy-xz-yz-\dfrac{7}{9}.
   \end{split}
\]
Then,
\[
   M_1(x)=-\dfrac{x}{3}+\dfrac{1}{9},\ \ M_2(y)=-\dfrac{y}{3}+\dfrac{1}{9},\ \ \mbox{and} \ \ M_3(z)=-\dfrac{z}{3}+\dfrac{1}{9},\ \
\]
so that the patch on $R$ is given by
\[
   P(x,y,z)=A(x,y,z)-\dfrac{1}{9}Q(3x-1,3y-1,3z-1),
\]
where $Q$ is a quasi-copula with appropriate bivariate marginals. Notice that
\[
   \dfrac{G_1(y,z)}{V}=\dfrac{G(\frac23,y,z)}{V}=\dfrac{M_2(y)}{V}\dfrac{M_3(z)}{V},
\]
and similarly
\[
   \dfrac{G_2(x,z)}{V}=\dfrac{M_1(x)}{V}\dfrac{M_3(z)}{V}, \dfrac{G_3(x,y)}{V}=\dfrac{M_1(x)}{V}\dfrac{M_2(y)}{V}.
\]
Hence bivariate quasi-copulas obtained by the (extension of) Sklar theorem are $Q_1=Q_2=Q_3=\pi$, the independence copula. Apply Proposition \ref{pr:stepI} with $C_i=Q_i$ for $i=1,2,3$ to get
\[
   \underline{Q}(x,y,z)=\max\{W(x,yz),W(y,xz),W(z,xy)\}=\max\{0,x+yz-1,y+xz-1,z+xy-1\}.
\]
In this case $P$ is not a quasi-copula. For instance,
\[    P\left(x,\dfrac{1}{2},\dfrac{2}{5}\right)=\min \left\{\dfrac{7x}{30}-\dfrac{1}{90},-\dfrac{x}{10}+ \dfrac{1}{5} \right\} \]
is a function of $x$ which is not increasing for $x\in\left(\dfrac{19}{30},\dfrac{2}{3}\right)$.
Hence $P$ is not a patch of a quasi-copula.

}




\section{Extending a sub-quasi-copula}\label{sec:ext}

In this section we extend a sub-quasi-copula to a quasi-copula.
On the way to independent solutions of this problem we first recall that an \emph{order closed} set is meant to be closed under the operation of suprema and infima of arbitrary subsets.

\begin{lemma}\label{le:infsup}
  Let $\mathcal{S}$ be a bounded set of functions on a rectangle $R$ such that each of them is increasing and 1-Lipschitz in each argument. Then the local bounds
\[
    \underline{\mathcal{S}}=\inf\{S\in\mathcal{S}\}\quad\mbox{and}\quad \overline{\mathcal{S}} = \sup\{S\in\mathcal{S}\}
\]
are both increasing and 1-Lipschitz in each argument. The order closure of $\mathcal{S}$ is compact in the uniform norm.
\end{lemma}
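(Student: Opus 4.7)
The first assertion follows just by unpacking the pointwise definitions. For monotonicity, if $\mathbf{x}\leqslant\mathbf{y}$ in $R$, then each $S\in\mathcal{S}$ satisfies $S(\mathbf{x})\leqslant S(\mathbf{y})$, and taking infima (respectively suprema) preserves the inequality. For the 1-Lipschitz condition in the $k$-th argument, fix $\mathbf{x}$ and $\mathbf{y}$ differing only in the $k$-th coordinate; then for every $S\in\mathcal{S}$,
\[
\underline{\mathcal{S}}(\mathbf{x})\leqslant S(\mathbf{x})\leqslant S(\mathbf{y})+\abs{x_k-y_k},
\]
so $\underline{\mathcal{S}}(\mathbf{x})-\abs{x_k-y_k}\leqslant\inf_S S(\mathbf{y})=\underline{\mathcal{S}}(\mathbf{y})$. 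The symmetric inequality gives the other direction, and the same reasoning handles $\overline{\mathcal{S}}$.

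Denote by $\widetilde{\mathcal{S}}$ the order closure of $\mathcal{S}$. Applying what we just proved to arbitrary subsets of $\widetilde{\mathcal{S}}$, every element of $\widetilde{\mathcal{S}}$ is again increasing and 1-Lipschitz in each argument, hence Lipschitz (with constant $n$) on $R$. Combined with the uniform bound inherited from $\mathcal{S}$, this makes $\widetilde{\mathcal{S}}$ an equicontinuous and uniformly bounded family on the compact rectangle $R$. The Arzel\`a--Ascoli theorem then gives that $\widetilde{\mathcal{S}}$ is relatively compact in $(C(R),\norm{\cdot}_\infty)$.

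It remains to show that $\widetilde{\mathcal{S}}$ is uniformly closed. Suppose $(f_n)\subseteq\widetilde{\mathcal{S}}$ and $f_n\to f$ uniformly. Define
\[
g_n=\inf_{k\geqslant n} f_k;
\]
by order closedness, each $g_n$ belongs to $\widetilde{\mathcal{S}}$. The sequence $(g_n)$ is pointwise increasing in $n$, and since $f_n\to f$ pointwise we have
\[
\sup_n g_n=\liminf_n f_n=f
\]
pointwise on $R$. A final application of order closedness yields $f=\sup_n g_n\in\widetilde{\mathcal{S}}$, completing the proof.

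The only subtlety worth flagging is that the order closure is by definition closed under \emph{arbitrary} suprema and infima; otherwise one would have to iterate the sup/inf construction transfinitely to realize every uniform limit, which is exactly what the single $\liminf$ step above avoids.
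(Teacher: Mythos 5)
Your proof is correct. Note that the paper itself gives no argument for this lemma: it only remarks that the first assertion is ``a straightforward extension of the proof of \cite[Theorem~6.2.5]{Nels}'', which covers the monotonicity and Lipschitz claims for $\underline{\mathcal{S}}$ and $\overline{\mathcal{S}}$ but says nothing about the compactness of the order closure. Your first paragraph is exactly the Nelsen-style argument the authors have in mind. The second and third paragraphs supply the part the paper leaves entirely implicit, and they do so cleanly: the key observations are (i) the class of uniformly bounded functions that are increasing and 1-Lipschitz in each argument is itself closed under arbitrary pointwise suprema and infima, hence contains the order closure $\widetilde{\mathcal{S}}$, so Arzel\`a--Ascoli applies; and (ii) a uniform (indeed pointwise) limit $f$ of a sequence $(f_n)\subseteq\widetilde{\mathcal{S}}$ is recovered as $\sup_n\inf_{k\geqslant n}f_k$, which lies in $\widetilde{\mathcal{S}}$ precisely because the order closure is closed under \emph{arbitrary} (in particular countable) suprema and infima. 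This liminf trick is the right way to get uniform closedness without any transfinite bookkeeping, and together with relative compactness it yields compactness in $(C(R),\norm{\cdot}_\infty)$. Your closing caveat about the definition of order closure is well taken; the only stylistic quibble is that ``applying what we just proved to arbitrary subsets of $\widetilde{\mathcal{S}}$'' is better phrased as the closure argument in (i) above, since $\widetilde{\mathcal{S}}$ is generated from $\mathcal{S}$ by iterating sups and infs rather than given in advance.
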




While the proof of this lemma is a straightforward extension of the proof of \cite[Theorem~6.2.5]{Nels}
, the extension to be given now 
is more sophisticated. Recall the notation for the formal boundary function $F$, and index functions $z_k$ and $z_k'$ of Lemma \ref{le:additive}.

\begin{proposition}[Local patch bounds]\label{pr:mejekrp} Fix a choice of function $F$ satisfying the boundary conditions \textrm{\bf PB} (i)--(iii). Then the set $\mathcal{S}$ of patches having $F$ for its boundary, and being increasing and 1-Lipschitz in each argument, is nonempty. The local bounds of  $\mathcal{S}$ on $R$ are equal to
\[
  \begin{split}
     \overline{\mathcal{S}} & = \min_{k\in[n]}\{ F(\mathbf{x}^{z_k'}), F(\mathbf{x}^{z_k})+x_k-a_k \} \\
     \underline{\mathcal{S}} & = \max_{k\in[n]}\{ F(\mathbf{x}^{z_k}),  F(\mathbf{x}^{z_k'})+x_k-b_k \}.
  \end{split}
\]
\end{proposition}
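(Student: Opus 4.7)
The plan is to prove the proposition in two stages: first derive the pointwise bounds that any element of $\mathcal{S}$ must satisfy, and then show that the two explicit functions in the claim actually lie in $\mathcal{S}$, which simultaneously delivers nonemptiness and sharpness.

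For the first stage, fix any $S\in\mathcal{S}$, $\mathbf{x}\in R$ and $k\in[n]$. Monotonicity of $S$ in its $k$-th argument gives $F(\mathbf{x}^{z_k}) = S(\mathbf{x}^{z_k}) \leqslant S(\mathbf{x}) \leqslant S(\mathbf{x}^{z_k'}) = F(\mathbf{x}^{z_k'})$, while the $1$-Lipschitz condition in the $k$-th argument yields $S(\mathbf{x}) - F(\mathbf{x}^{z_k}) \leqslant x_k - a_k$ and $F(\mathbf{x}^{z_k'}) - S(\mathbf{x}) \leqslant b_k - x_k$. Rearranging and taking the minimum (resp.\ maximum) over $k\in[n]$ yields $S \leqslant \overline{\mathcal{S}}$ (resp.\ $S \geqslant \underline{\mathcal{S}}$) with $\overline{\mathcal{S}}, \underline{\mathcal{S}}$ as defined in the statement.

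For the second stage, I claim that both $\overline{\mathcal{S}}$ and $\underline{\mathcal{S}}$ themselves belong to $\mathcal{S}$. Each candidate term $F(\mathbf{x}^{z_k'})$, $F(\mathbf{x}^{z_k}) + (x_k - a_k)$, and their counterparts with $b_k$, is increasing and $1$-Lipschitz in every argument: in the $j$-th argument with $j\neq k$ this follows from \textbf{PB}\emph{(ii)} applied to $F_k$ or $F_k'$, while in $x_k$ these functions are either constant or affine with slope one. Since pointwise minima and maxima preserve both properties, the monotonicity and $1$-Lipschitz requirements on $\overline{\mathcal{S}}$ and $\underline{\mathcal{S}}$ are satisfied.

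The main obstacle is the boundary match. For $\overline{\mathcal{S}}$ at the face $x_k = a_k$ the witnessing term is the $k$-th one, $F(\mathbf{x}^{z_k}) + (x_k-a_k) = F_k(\mathbf{x})$, so I need each of the remaining $2n-1$ terms to be $\geqslant F_k(\mathbf{x})$. Using \textbf{PB}\emph{(i)}, I rewrite each mixed value $F_j|_{x_k=a_k}$ or $F_j'|_{x_k=a_k}$ as $F_k|_{x_j=a_j}$ or $F_k|_{x_j=b_j}$ on the appropriate $(n-2)$-face; then monotonicity of $F_k$ in $x_j$ from \textbf{PB}\emph{(ii)} gives $F(\mathbf{x}^{z_j'}) = F_k|_{x_j=b_j} \geqslant F_k(\mathbf{x})$, the $1$-Lipschitz bound on $F_k$ in $x_j$ gives $F(\mathbf{x}^{z_j}) + (x_j - a_j) = F_k|_{x_j=a_j} + (x_j - a_j) \geqslant F_k(\mathbf{x})$, and \textbf{PB}\emph{(iii)} gives $F(\mathbf{x}^{z_k'}) = F_k'(\mathbf{x}) \geqslant F_k(\mathbf{x})$. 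The face $x_k = b_k$ is symmetric, using \textbf{PB}\emph{(iii)} in the form $F_k(\mathbf{x}) + (b_k-a_k) \geqslant F_k'(\mathbf{x})$ to confirm that the witnessing term $F(\mathbf{x}^{z_k'})$ is the minimum. The analysis for $\underline{\mathcal{S}}$ is entirely dual. This places $\overline{\mathcal{S}}$ and $\underline{\mathcal{S}}$ in $\mathcal{S}$, proves nonemptiness, and identifies them as the local bounds.
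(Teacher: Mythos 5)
Your proof is correct and follows essentially the same route as the paper's: derive the pointwise upper and lower bounds from monotonicity and the 1-Lipschitz property of an arbitrary patch, then verify that the explicit min/max formulas are themselves increasing, 1-Lipschitz, and match $F$ on every face, which yields nonemptiness and sharpness at once. Your boundary verification is in fact slightly more explicit than the paper's, since you spell out the use of \textbf{PB}\emph{(i)} to identify the mixed values on the $(n-2)$-faces before invoking \textbf{PB}\emph{(ii)} and \emph{(iii)}.
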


\begin{proof}
  Denote by $P$ a hypothetical desired patch and note that for all $k\in[n]$: $P(\mathbf{x}) \leqslant F(\mathbf{x}^{z_k'})$ and $P(\mathbf{x})- F(\mathbf{x}^{z_k}) \leqslant x_k-a_k$, first by monotonicity and second by the 1-Lipschitz property of the patch. This implies easily that $P\leqslant \overline{\mathcal{S}}$ in the pointwise order. The fact that $\overline{\mathcal{S}}$ is clearly monotone increasing and 1-Lipschitz in each argument will then imply that it is a possible patch, as soon as we show that it satisfies the boundary conditions. Actually, we will show that
\begin{equation}\label{eq:meja}
  \overline{\mathcal{S}}(\mathbf{x}^{z_j})=F(\mathbf{x}^{z_j})
\end{equation}
for any $j\in[n]$. To prove that, fix $j$ and let us go through all the terms $k\in[n]$ of the defining minimum of $\overline{\mathcal{S}}$. When $k=j$, the two terms bring us to the desired Equation \eqref{eq:meja} since $x_j-a_j=0$ and $F(\mathbf{x}^{z_j}) \leqslant F(\mathbf{x}^{z_j'})$ by Condition \textbf{PB}\emph{(iii)}. Now, if we consider an index $k\ne j$, we are in an $(n-1)$-dimensional face of $R$, where function $F$ is defined, monotone increasing and 1-Lipschitz by condition \textbf{PB}\emph{(ii)}. The left hand term of the minimum becomes in this case $F\left((\mathbf{x}^{z_j})^{z_k'}\right)\geqslant F\left(\mathbf{x}^{z_j}\right)$, so that \eqref{eq:meja} holds in this case as well. Finally, to get the right hand side term we compute
\[
    F\left(\mathbf{x}^{z_j}\right) - F\left(\left(\mathbf{x}^{z_j}\right)^{z_k}\right) \leqslant x_k-a_k,
\]
thus concluding the proof of the first half of the ``upper'' part of the proposition. The second half goes in exactly the same way: We need to show that Equation \eqref{eq:meja} holds when we replace $z_j$ by $z_j'$ on both sides. The fact that $\underline{\mathcal{S}}$ is the lower bound of this set goes in a similar way.
\end{proof}


Suppose that $Q^*:\DD\to\II$ is a sub-quasi-copula. By \cite[Theorem 2.3]{QuMoSe} it has an extension to a quasi-copula $Q \colon \II^2\to\II$ in case of $n=2$; in \cite{KoBuKoOmSt} all the solutions were found in this case. However, for the multivariate version of an equivalent problem only the copula case was solved (cf.\ \cite{AmDiFe,AmDiDuFe}).
Let us now extend the notation of \cite{KoBuKoOmSt}. Let the considered sub-quasi-copula be defined on $\DD=\delta_1\times\delta_2\times\cdots\times \delta_n$, where the closed sets $\delta_i \subseteq\II$ are given for $i\in[n]$. Also for $i\in[n]$ let us introduce:
\emph{\begin{enumerate}[(i)]
  \item Write the sets $\delta_i$ as a union of singletons and closed intervals. Consider only maximal possible intervals contained in $\delta_i$ and observe that any two of the kind are disjoint. Denote by $\mathcal{I}_i=\{I_j^i\}_{j\in J_i}$ the (necessarily countable) set of all such nontrivial intervals.
  \item Write the complement $\II\setminus \delta_i$ as a union of countably many disjoint open intervals. We introduce the family $\mathcal{O}_i=\{O_j^i\}_{j\in K_i}$ of all the closures of these intervals.
  \item Introduce $\mathcal{T}_i=\mathcal{I}_i\cup\mathcal{O}_i$ and $L_i=J_i\sqcup K_i$.
\end{enumerate}}
Observe that all the index sets $J_i,K_i$, and $L_i$ are countable. Also, any pair of the closed intervals in the family $\mathcal{T}_i$ is clearly either disjoint or intersects at a common endpoint. 
By abuse of notation we will identify a family of intervals by the union of these intervals. Observe that $\mathcal{T}_i$ is dense in $\II$. Indeed, assume, if possible, that $\II$ contains an open interval $I$ having an empty intersection with $\mathcal{T}_i$. Therefore, $I$ has an empty intersection with $\mathcal{O}_i$, so that it is contained in $\delta_i$. Consequently, there is a closed interval containing $I$ and being a member of $\mathcal{I}_i$, contradicting the above.
 Note that the complement of $\mathcal{T}_i$ in $\II$ can be a countable or even uncountable Cantor like set with positive Lebesgue measure in [0,1] (cf.\ \cite{GeOl,AmDiDuFe}).

Let us denote by $\mathcal{T}$ the set of all rectangles $R_\mathbf{s}= \displaystyle \prod_{\substack{i=1
}}^{n}[a_{s_i},b_{s_i}]$ indexed by $n$-tuples of indices $\mathbf{s}=(s_1,s_2,\ldots,s_n)$ with $s_i\in L_i$ for $i\in[n]$ and denote the set of these $n$-tuples by $L$. Observe that necessarily $[a_{s_i},b_{s_i}]$ is either a member of $\mathcal{I}_i$ or a member of $\mathcal{O}_i$ and that in either case $a_{s_i}\neq b_{s_i}$, so that a rectangle $R_{\mathbf{s}}$ for $\mathbf{s}\in L$ is always nondegenerate. It follows easily from our definitions that the family of rectangles  $R_{\mathbf{s}}$ for $\mathbf{}s\in L$ is countable and dense in $\II^n$.
Following our notation of Section \ref{sec:patch} we introduce a hypothetical function $F_\mathbf{s}$ to be defined on the boundary of the rectangle $R_\mathbf{s}$ for every $\mathbf{s}\in L$. It is almost clear from the above that $F$ is defined at least at the vertices of $R_\mathbf{s}$, i.e., at points $\mathbf{x}^z$ where {vector} 
$z$ has no zero entries. Let us look into some details about where exactly this function is defined
. In the defining product $R_\mathbf{s}= \displaystyle \prod_{\substack{i=1
}}^{n}[a_{s_i},b_{s_i}]$ some of the segments (say, $k$ of them, $0\leqslant k\leqslant n$) belong to $\mathcal{I}_\cdot$ and some to $\mathcal{O}_\cdot$. Let us rewrite it so that the former ones are counted with starting indices continuing with counting the latter ones, after an appropriate permutation of indices:
\[
    \widetilde{R}_\mathbf{s}^k= \displaystyle \prod_{\substack{i=1
}}^{n}[a_{s_i},b_{s_i}],\ \mbox{where}\ [a_{s_i},b_{s_i}]\in\mathcal{I}_i, 1\leqslant i\leqslant k,\ \mbox{and}\ [a_{s_i},b_{s_i}]\in\mathcal{O}_i, k< i\leqslant n.
\]
Consequently, the definition set of the hypothetical function $F$ contains exactly the members of $\widetilde{R}_\mathbf{s}^k$ of the form $\mathbf{x}^z$ with $z_i$ nonzero whenever $k< i\leqslant n$. Clearly, the definition set of $F$ on the original rectangle $R_\mathbf{s}$ can be obtained from this one using the reverse permutation of indices on this result. We will denote it by $D_F$.
It is our aim to find a possible quasi-copula $Q$ solving our problem using three main ingredients: (1) use induction on dimension $k$ for which $Q^*$ is defined on a given $R_\mathbf{s}$ for $\mathbf{s}\in L$; (2) extend these solutions on all of $\mathcal{T}$ using the fact that $L$ is countable, and (3) extend this solution by continuity to all of $\II^n$. 

We observed in Section \ref{sec:patch} that it is good to have existence of a patch given the boundary condition but it is even better to have an upper and a lower bound for all the patches of the kind. It is our aim now to present an algorithm for the two bounds of all quasi-copulas extending a given sub-quasi-copula.
We start by finding these bounds on segments and organize the work by going through segments along each of the coordinates. Actually, we will give particularities only for the $n$-th coordinate since any other goes in exactly the same way. Consider all vertices of the $(n-1)$-dimensional rectangles of the form $\displaystyle \prod_{\substack{i=1
}}^{n-1}[a_{s_i},b_{s_i}]$ -- they build a mesh to be denoted by $V$.
Observe that all these vertices belong to (the corresponding coordinates of) the definition set of $Q^*$ since they are either in the intervals of type $\mathcal{I}$ or at the endpoints of the intervals of type $\mathcal{O}$.
Rewrite the countable set $\cup_{t\in L_n}\{a_{t},b_t\}$ as $\{c_t\}_{t\in M}$ in order to simplify the notation to follow and denote $\gamma_t^\mathbf{v}=Q^*(\mathbf{v},c_t)$ for all $\mathbf{v}\in V$ and $t\in M$. In the following proposition we will need these values to satisfy
\begin{enumerate}[(A)]
  \item $0\leqslant\gamma_t^{\mathbf{v}'}-\gamma_t^{\mathbf{v}}\leqslant v'_i-v_i$ for all $t\in M$ and $\mathbf{v}',\mathbf{v}\in V$ such that they are distinct only in the $i$-th coordinate for any $i\in [n-1]$, namely $v_i'>
      v_i$;
  \item if $c_t<c_{t'}$ for some $t,t'\in M$, then we have $0\leqslant \gamma_{t'}^{\mathbf{v}}-\gamma_t^{\mathbf{v}}\leqslant c_{t'}-c_t$.
\end{enumerate}
We want to introduce functions $F_t^\mathbf{v}({x})$ (a special notation to be used only within Proposition \ref{pr:extext}) to become the desired values of $Q(\mathbf{v},x)$ on segments emerging from vertices of $V$, i.e., for all ${x} \in [a_t,b_t]$, all $t\in M$, and all $\mathbf{v}\in V$. We want these functions to satisfy conditions
{\it\begin{enumerate}[({\rm C}i)]
  \item Function $F_t^\mathbf{v}$ is increasing and 1-Lipschitz in its only argument on $[a_t,b_t]$ for every $t\in L_n$ and every $\mathbf{v}\in V$.
  \item These functions satisfy the boundary conditions $F_t^\mathbf{v}({a_t})= \alpha_t^\mathbf{v}= Q^*(\mathbf{v},a_t)$ and $F_t^\mathbf{v}({b_t}) =Q^*(\mathbf{v},b_t)$ for $t\in L_n$.
  \item If $a_t<a_{t'}$ for some $t,t'\in L_n$, then we have $0\leqslant F_{t'}^\mathbf{v}(x')-F_{t}^\mathbf{v}(x)\leqslant x'-x$ for all $\mathbf{v}\in V$ and $x\in[a_t,b_t],x'\in[a_{t'},b_{t'}]$.
  \item For every $t\in L_n$, every $x\in[a_t,b_t]$, and every $\mathbf{v},\mathbf{v}'\in V$ such that they are distinct only in the $i$-th coordinate, we have $0\leqslant F_t^{\mathbf{v}'}(x)-F_t^\mathbf{v}(x)\leqslant v_i'-v_i$.
\end{enumerate}}
We are now in position to extend our quasi-copula on these segments thus fulfilling the starting point of our inductive algorithm. Observe that the role of the set $M$ with indices taken from $L_n$ can be replaced by an analogous set with indices from $L_k, k\in[n]$, while replacing the mesh $V$ with the one having coordinates in the directions different from $k$. This way we can extend $Q^*$ on the segments, i.e., 1-dimensional faces of our grid.

\begin{proposition}\label{pr:extext}
  If the segment given by a $t\in M$ and a $\mathbf{v}\in V$ is a member of $\mathcal{I}_n$, then the desired function is uniquely determined by the sub-quasi-copula $Q^*$. If it is a member of $\mathcal{O}_n$, then
  the functions $\underline{F}_t^\mathbf{v}(x)=\max\{F_t^\mathbf{v}({a_t}), F_t^\mathbf{v}({b_t})+x-b_t\}$ and $\overline{F}_t^\mathbf{v}(x) =\min\{F_t^\mathbf{v}({a_t})+x-a_t,F_t^\mathbf{v}({b_t})\}$ satisfy Conditions \textit{({\rm C}i)--({\rm C}iv)} for $x\in[a_t,b_t]$ with $t\in M$. Furthermore, if ${F}_t^\mathbf{v}(x)$ are any functions satisfying these conditions, then
\[
    \underline{F}_t^\mathbf{v}(x)\leqslant{F}_t^\mathbf{v}(x)\leqslant \overline{F}_t^\mathbf{v}(x),\quad\mbox{for}\quad x\in[a_t,b_t]\quad \mbox{with}\quad t\in M.
\]
\end{proposition}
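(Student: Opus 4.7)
The plan is to split into the two declared cases and then deduce extremality from a monotonicity/1-Lipschitz clamping argument. The first case is essentially definitional: if $[a_t,b_t]\in\mathcal{I}_n$, then $[a_t,b_t]\subseteq\delta_n$, so for every $\mathbf{v}\in V$ and every $x\in[a_t,b_t]$ the point $(\mathbf{v},x)$ lies in $\DD$; any extension of $Q^*$ must agree with $Q^*$ at such points, which forces $F_t^\mathbf{v}(x)=Q^*(\mathbf{v},x)$, and the four conditions \emph{({\rm C}i)--({\rm C}iv)} are then inherited directly from the sub-quasi-copula axioms on $Q^*$.

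For $[a_t,b_t]\in\mathcal{O}_n$ I would verify \emph{({\rm C}i)--({\rm C}iv)} for $\overline{F}$, the case of $\underline{F}$ being symmetric. Condition \emph{({\rm C}i)} is automatic since each argument of the inner minimum is either a constant or of the form ``$x$ plus a constant'', and $\min$ preserves the monotone 1-Lipschitz property. For \emph{({\rm C}ii)} I would invoke hypothesis \emph{(B)} applied to the endpoints $c_\sigma=a_t<b_t=c_\tau$ to obtain $0\le F_t^\mathbf{v}(b_t)-F_t^\mathbf{v}(a_t)\le b_t-a_t$, whence at $x=a_t$ the inner $\min$ selects $F_t^\mathbf{v}(a_t)$ and at $x=b_t$ it selects $F_t^\mathbf{v}(b_t)$, matching the prescribed endpoint values. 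For \emph{({\rm C}iii)} and \emph{({\rm C}iv)}, the task is to bound $\overline{F}_{t'}^\mathbf{v}(x')-\overline{F}_t^\mathbf{v}(x)$ above by $x'-x$ and below by $0$; I would split on which argument of the $\min$ realises $\overline{F}_t^\mathbf{v}(x)$ and then choose the structurally matching argument on the $(t',x')$ side, reducing each subcase to a difference of values of $Q^*$ on $V\times M$ that is controlled by hypothesis \emph{(A)} or \emph{(B)}.

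For extremality, let $F_t^\mathbf{v}$ be any function satisfying \emph{({\rm C}i)--({\rm C}iv)}. Monotonicity gives $F_t^\mathbf{v}(x)\le F_t^\mathbf{v}(b_t)$ and the 1-Lipschitz property gives $F_t^\mathbf{v}(x)\le F_t^\mathbf{v}(a_t)+x-a_t$; the minimum of these two upper bounds is precisely $\overline{F}_t^\mathbf{v}(x)$, so $F_t^\mathbf{v}(x)\le\overline{F}_t^\mathbf{v}(x)$. The symmetric pair $F_t^\mathbf{v}(x)\ge F_t^\mathbf{v}(a_t)$ and $F_t^\mathbf{v}(x)\ge F_t^\mathbf{v}(b_t)+x-b_t$ yields $F_t^\mathbf{v}(x)\ge\underline{F}_t^\mathbf{v}(x)$.

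The main obstacle I expect is the pairing of $\min$ arguments in \emph{({\rm C}iii)} and \emph{({\rm C}iv)}: a careless choice on the $(t',x')$ side can yield an estimate in terms of $b_{t'}-b_t$ where one actually needs $x'-x$. The remedy is to pair the active argument on the $(t,x)$ side with the structurally matching argument on the $(t',x')$ side, and to handle the mixed ``gap'' case $F_t^\mathbf{v}(b_t)$ against $F_{t'}^\mathbf{v}(a_{t'})+x'-a_{t'}$ by exploiting $b_t\le a_{t'}$ together with the 1-Lipschitz estimate $F_{t'}^\mathbf{v}(a_{t'})-F_t^\mathbf{v}(b_t)\le a_{t'}-b_t$ coming from hypothesis \emph{(B)}, which telescopes down to $x'-b_t\le x'-x$.
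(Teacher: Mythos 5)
Your proposal is correct and follows essentially the same route as the paper: verify Conditions (Ci)--(Civ) for one of the two bounds by case analysis on which branch of the $\min$/$\max$ is active (the paper does $\underline{F}$ in detail and declares $\overline{F}$ symmetric, you do the reverse), and obtain extremality from exactly the same monotonicity plus 1-Lipschitz clamping argument. Your telescoping treatment of the gap case via $b_t\leqslant a_{t'}$ matches the paper's computation $x'-a_{t'}+a_{t'}-b_t+b_t-x=x'-x$, and your explicit handling of the $\mathcal{I}_n$ case merely spells out what the paper leaves implicit.
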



\begin{proof} Fix $\mathbf{v}\in V$ and assume that the segment corresponding to $t\in M$ is a member of $\mathcal{O}_n$. Write $F_t$ instead of $F_t^\mathbf{v}$ to simplify the notation and similarly with analogous notations with the lower and upper bars.
  (1) Let us show that $\underline{F}_t(x)$ satisfies the conditions. Conditions (C\emph{i}) and (C\emph{ii}) are easy. (C\emph{iii}): Assume $a_t<a_{t'}$, so that $a_t<b_t\leqslant a_{t'}<b_{t'}$. For $x,x'$ as in (C\emph{iii}) we have clearly (after adding and subtracting $\underline{F}_{t'}(a_{t'})$ and $\underline{F}_t(b_t)$)
  \[
     0\leqslant \underline{F}_{t'}(x')-\underline{F}_t(x)
     \leqslant x'-a_{t'}+a_{t'}-b_t+b_t-x=x'-x.
  \]
  (C\emph{iv}): Let us introduce $\Delta(x)= F_t^{\mathbf{v}'}(x)-F_t^\mathbf{v}(x)$. Denote by $x^{\mathbf{v}}$, respectively $x^{\mathbf{v'}}$, the unique value with $\alpha_t^{\mathbf{v}}= \beta_t^{\mathbf{v}}+x^{\mathbf{v}}-b_t$, respectively $\alpha_t^{\mathbf{v}'}= \beta_t^{\mathbf{v}'}+x^{\mathbf{v}'}-b_t$. If $x\leqslant\min\{x^{\mathbf{v}},x^{\mathbf{v'}}\}$, then $\Delta(x)= \alpha_t^{\mathbf{v}'}-\alpha_t^{\mathbf{v}}$ and we are done. If $x\geqslant\max\{x^{\mathbf{v}}, x^{\mathbf{v'}}\}$, then $\Delta(x)=\beta_t^{\mathbf{v}'}-\beta_t^{\mathbf{v}}$ and we are also done. If $x^{\mathbf{v'}}\leqslant x\leqslant x^{\mathbf{v}}$ then $\Delta(x)= \beta_t^{\mathbf{v}'}+x -b_t -\alpha_t^{\mathbf{v}}\in [\alpha_t^{\mathbf{v}'}-\alpha_t^{\mathbf{v}}, \beta_t^{\mathbf{v}'}-\beta_t^{\mathbf{v}}]\subseteq [0,\mathbf{v}_i'-\mathbf{v}_i]$. Finally, if $x^{\mathbf{v}}\leqslant x\leqslant x^{\mathbf{v'}}$, then a similar construction yields the same result.
 (2) The fact that $\overline{F}_t(x)$ satisfies the same conditions goes in the same way. (3) Choose any functions ${F}_t(x)$ satisfying Conditions \textit{({\rm C}i)--({\rm C}iv)}. Since $F_t(x)\leqslant F_t(b_t)=\beta_t$ and $F_t(x)-F_t(a_t)=F_t(x)-\alpha_t\leqslant x-a_t$, we conclude that $F_t(x)\leqslant \min\{\beta_t,\alpha_t+x-a_t\}=\overline{F}_t(x)$ and the proof of the other inequality goes in a similar way.
\end{proof}

Having now all the extensions to the segments corresponding to our mesh it is time for an inductive step. (From now on the notation $F$ will be used as in this section before Proposition \ref{pr:extext}.) Recall that the main conditions on a quasi-copula (after taking care of the boundary conditions) are that it be monotone and 1-Lipschitz in each argument. So, it is not hard to see that the following conditions are necessary for a sub-quasi-copula to be further extendable to a quasi-copula. We will write them in notation adjusted to $\widetilde{R}_\mathbf{s}^k$, where $k$ is the number of segments that belong to $\mathcal{I}$. (Observe that our induction is to be performed on $k$.)
\begin{enumerate}[(A)]
  \item In all variables $s_i$ for all $1\leqslant i\leqslant k$ the function $F$ must satisfy Conditions \textbf{PB} \emph{(i)--(iii)}.
  \item Let for any $j\in[n]$ and some $\mathbf{s},\mathbf{s}'\in L$ such that $s_i=s_i'$ for $i\in[n],i\ne j$, and $b_{s_j}\leqslant a_{s_j'}$. Then we have $0\leqslant F_{\mathbf{s}'}(\mathbf{x}^{z_j})- F_{\mathbf{s}}(\mathbf{x}^{z_j'}) \leqslant a_{s_j'}-b_{s_j}$ for all $\mathbf{x}\in D_F\subseteq R_\mathbf{s}$,\footnote{Observe that this is the same as requiring $\mathbf{x}\in R_{\mathbf{s}'}$. Consequently, $D_F$ are the same on both rectangles.} where $z_j,z_j'$ are defined as in Lemma \ref{le:additive}.
\end{enumerate}
Condition (B) implies, in particular, that if $b_{s_j}= a_{s_j'}$, then $F_{\mathbf{s}'}(\mathbf{x}^{z_j}) = F_{\mathbf{s}}(\mathbf{x}^{z_j'})$ for all $\mathbf{x}\in R_\mathbf{s}$. In other words, if two rectangles have all but one indeces in common, so that they intersect in an $(n-1)$-face, their functions $F$ have the same value on this intersection.

\begin{theorem}\label{th:main} Fix a sub-quasi-copula $Q^*$ and a rectangle $R_\mathbf{s},\mathbf{s}\in L$. 
\begin{enumerate}[(a)]
  \item Assume that for the rectangle considered the value of the hypothetical boundary function $F$ has been extended to all faces of dimension up to a certain index $k$ and choose a face of dimension $k+1$. Then the boundary function $F$ can be extended to this face satisfying Conditions (A) and (B).
  \item The function $F$ can be extended to a patch on the whole rectangle considered satisfying Condition (A) and (B). The collection of these patches (when going through all the rectangles in a certain order and taking into account the definitions of functions $F$ on previous rectangles) defines a quasi-copula extension of the given sub-quasi-copula, after extending it to the closure by continuity.
  \item Every quasi-copula extending the given sub-quasi-copula can be obtained in this way.
\end{enumerate}
\end{theorem}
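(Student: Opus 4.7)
The plan is to prove parts (a), (b), (c) in order, with the bulk of the work in part (a); part (b) then combines the inductive construction with a density and continuity argument, and part (c) is a near-tautology given the construction.

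For part (a), I would induct on the dimension $k$ of the faces on which $F$ has been extended. The base case $k=0$ is forced: every vertex of every rectangle $R_\mathbf{s}$ with $\mathbf{s}\in L$ lies in $\DD$ (in each coordinate it is either an endpoint of an interval in $\mathcal{O}_i$ or a point of some $I\in\mathcal{I}_i$), so one must set $F$ equal to $Q^*$ at those vertices, and Conditions (A), (B) hold there by the sub-quasi-copula axioms. The step $k=0\to k=1$ is supplied by Proposition \ref{pr:extext}, which gives the explicit local bounds $\underline{F}^\mathbf{v}_t\leqslant F^\mathbf{v}_t\leqslant \overline{F}^\mathbf{v}_t$; any choice in the band satisfies (Ci)--(Civ), and these encode (A) and (B) in dimension one. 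For the step $k\to k+1$ with $k\geqslant 1$ I fix a $(k+1)$-face $\Phi$ of the global mesh. By the inductive hypothesis $F$ is already defined on each $k$-face of $\partial\Phi$ and satisfies Conditions (A) and (B) there; treating $\Phi$ as an ambient $(k+1)$-rectangle, this means the induced boundary function for $\Phi$ verifies Conditions \textbf{PB}(i)--(iii), so Proposition \ref{pr:mejekrp} applies and returns a nonempty band $[\underline{\mathcal{S}}_\Phi,\overline{\mathcal{S}}_\Phi]$; I pick any admissible interpolant, e.g.\ $\overline{\mathcal{S}}_\Phi$, as the definition of $F$ on $\Phi$. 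Making this choice once per face of the global mesh (rather than once per containing rectangle) automatically enforces the equality case $b_{s_j}=a_{s_j'}$ of Condition (B), while the strict case $b_{s_j}<a_{s_j'}$ propagates along the bridging segment from the base-case 1-Lipschitz inequality of Proposition \ref{pr:extext}.

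For part (b), I apply the induction of (a) up to $k=n-1$, fixing a boundary function $F_\mathbf{s}$ on $\partial R_\mathbf{s}$ for every $\mathbf{s}\in L$. Applying Proposition \ref{pr:mejekrp} once more, now to each full rectangle $R_\mathbf{s}$, picks a patch $P_\mathbf{s}$ on its interior. Gluing these patches to $Q^*$ produces a function defined on $\DD\cup\bigcup_{\mathbf{s}\in L} R_\mathbf{s}$, a dense subset of $\II^n$, which is increasing in each argument (locally inside each rectangle by the patch property, and across neighbouring rectangles by Condition (B)) and 1-Lipschitz in each argument for the same reason. A monotone, 1-Lipschitz function on a dense subset of the compact cube $\II^n$ extends uniquely and continuously to $\II^n$ while preserving both properties, so we obtain a quasi-copula on $\II^n$; groundedness and the neutral-element axiom transfer from $Q^*$ at $\partial\II^n$.

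For part (c), let $Q$ be an arbitrary quasi-copula extending $Q^*$. Its restriction to $\partial R_\mathbf{s}$ is an admissible boundary function $F_\mathbf{s}$ (Conditions (A) and (B) being forced by the global monotonicity and 1-Lipschitz property of $Q$), and its restriction to $R_\mathbf{s}$ is, by Proposition \ref{pr:mejekrp}, a patch in the band $[\underline{\mathcal{S}},\overline{\mathcal{S}}]$; thus $Q$ is recovered by the construction of (b) for these specific choices. The main obstacle is the bookkeeping in the inductive step of (a): one must organise the extension so that faces shared by several rectangles are processed only once, and verify that Condition (B) --- the only condition linking distinct rectangles --- survives choice by choice rather than having to be enforced retroactively. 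This reduces to the observation that, once the induced boundary on each $(k+1)$-face is fixed, any admissible interpolant given by Proposition \ref{pr:mejekrp} automatically satisfies the 1-Lipschitz and monotonicity propagation across the shared lower-dimensional boundary, so no retroactive correction is ever needed.
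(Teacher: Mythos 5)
Your overall architecture (induct on face dimension, seed with Proposition \ref{pr:extext}, lift with Proposition \ref{pr:mejekrp}, glue over the countable dense family of rectangles, extend by continuity) matches the paper's. The gap is in the step you yourself flag as ``the main obstacle'' and then dismiss: the claim that, once the boundary of each face is fixed, \emph{any} admissible interpolant from the band $[\underline{\mathcal{S}}_\Phi,\overline{\mathcal{S}}_\Phi]$ automatically satisfies Condition (B) against the choices made on other faces, ``so no retroactive correction is ever needed.'' This is false. Condition (B) (and already Condition (C\emph{iv}) at the segment level) is a constraint between \emph{parallel} faces of distinct rectangles, not merely across a shared lower-dimensional boundary, and independent choices within the respective bands can violate it. Concretely, take $n=2$, $[a_t,b_t]=[0,\tfrac12]\in\mathcal{O}_1$, and two mesh levels $v=0.4$, $v'=0.5$ in the second coordinate with $Q^*(0,v)=Q^*(0,v')=0$ and $Q^*(\tfrac12,v)=Q^*(\tfrac12,v')=0.3$ (all compatible with the sub-quasi-copula axioms). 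Choosing $\overline{F}^{\,v}_t(x)=\min\{x,0.3\}$ on the lower segment and $\underline{F}^{\,v'}_t(x)=\max\{0,\,0.3+x-0.5\}$ on the upper one gives, at $x=0.25$, the values $0.25$ and $0.05$: monotonicity in the second coordinate fails, so this pair of individually admissible interpolants cannot be completed to a quasi-copula. Compatibility across parallel faces therefore has to be \emph{proved} for the particular selection one makes; it is not free.

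The paper handles this by proving part (a) only for the two \emph{globally consistent} extremal selections --- taking $\overline{\mathcal{S}}$ on every face at every level of the induction, or $\underline{\mathcal{S}}$ on every face at every level --- and then verifying Condition (B) by a term-by-term comparison of the defining minima (resp.\ maxima): each term in the formula for $F_{\mathbf{s}'}(\mathbf{x}^{z_j})$ has a counterpart among the terms for $F_{\mathbf{s}}(\mathbf{x}^{z_j'})$ whose difference lies in $[0,a_{s_j'}-b_{s_j}]$ by the inductive hypothesis, and taking minima or maxima preserves such two-sided bounds. Your write-up contains no substitute for this verification: even for your suggested default choice $\overline{\mathcal{S}}_\Phi$ on every face, you never check that (B) holds between two parallel faces, and that check is precisely the content of the paper's proof of (a). Parts (b) and (c) of your argument are essentially the paper's and are fine once (a) is repaired, but as written the inductive step rests on an incorrect automaticity claim. (A minor additional slip: when $b_{s_j}<a_{s_j'}$ there need not be any ``bridging segment'' between the two intervals, since the complement of $\mathcal{T}_j$ may be a Cantor-type set of positive measure, so the propagation argument you invoke for the strict case is unavailable even in principle.)
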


\begin{proof}
  \emph{(a)} Suppose that for the rectangle considered the value of the hypothetical boundary function $F$ has been extended to all faces of dimension up to a certain index $k$ and choose a face of dimension $k+1$. Assume with no loss that the face belongs to the first $k+1$ coordinates; if not, apply an appropriate permutation on coordinates. Here is the description of the next inductive step needed. Using Proposition \ref{pr:mejekrp} we fill all the patches whose first $k+1$ coordinates correspond to this face and the rest of the coordinates belong to respective $L_i, k+1<i\leqslant n$. (The first $k+1$ coordinates play the role of parameter $t$ in Proposition \ref{pr:extext}, while the rest of the coordinates represent points of the mesh determining locations of the face, denoted there by $\mathbf{v}$.) Observe that in case $k+1=n$ this condition is empty; this step presents therefore the last inductive step which means that we are done. In any case we need to show that Conditions (A) and (B) extend to the next step (if $k<n+1$) or conclude the final fit into the boundary values provided by $Q^*$ and its previous extensions. Once we are done with this particular face, we proceed with other faces of this size. Since there are only countably many of them we can conclude this stage of the proof using an inner induction within this inductive step.

So, to conclude this part of the proof we only need to show that the two conditions are valid. Start by (B): Take $j\in[n]$ and some $\mathbf{s},\mathbf{s}' \in L$ such that $s_i=s_i'$ for $i\in[n],i\ne j$, and that $b_{s_j}\leqslant a_{s_j'}$
. Then we want to show that
\begin{equation}\label{eq:(B)}
  0\leqslant F_{\mathbf{s}'}(\mathbf{x}^{z_j})- F_{\mathbf{s}}(\mathbf{x}^{z_j'}) \leqslant a_{s_j'}-b_{s_j}
\end{equation}
for all $\mathbf{x}\in D_F\subseteq R_\mathbf{s}$. In order to prove that we need to recall the definition of the function $F$. We verify \eqref{eq:(B)} for $F$ coming from two sources: (1) The upper bound of all possible patches fitting into the $(k+1)$-dimensional face under consideration. We obtain it via the formula for $\overline{\mathcal{S}}$ of Proposition \ref{pr:mejekrp}, after specifying first the upper bounds for values on segments, then on 2-dimensional faces, and so on up to $k$-dimensional faces of the given face. (2) In a similar way, the lower bound of all possible patches fitting into the $(k+1)$-dimensional face under consideration is obtained via the formula for $\underline{\mathcal{S}}$ of Proposition \ref{pr:mejekrp}.
To show Condition \eqref{eq:(B)} in case (1) we observe that the two functions $F$ are given each by a minimum of terms made of functions $F$ from the previous step for which we know that an analogous condition holds. Any of the terms defining the right hand side function $F$ has a counterpart in the left hand side function $F$, so that the difference between these two terms is greater than or equal to $0$.
By replacing the right hand side term with the minimum of these terms we conclude that $0\leqslant \mathrm{term}- F_{\mathbf{s}}(\mathbf{x}^{z_j'})$, where notation ``term'' stands for any of the terms defining $F_{\mathbf{s}'}(\mathbf{x}^{z_j})$. So, when we take the minimum over all terms under consideration, we decide that the left hand side inequality of Condition \eqref{eq:(B)} is true. To get the right hand side inequality we reverse this procedure: to each defining term of the left hand side function $F$ there is a corresponding term in the right hand side function $F$ such that the difference between these two terms is no greater than $a_{s_j'}-b_{s_j}$. This time we first observe that this inequality prevails when we replace the first of the terms by $F_{\mathbf{s}'}(\mathbf{x}^{z_j})$ and than conclude the proof of Condition \eqref{eq:(B)} by taking the minimum over all the so obtained inequalities.
Case (2) of Condition (B) goes in a similar way
. Condition (A) requires that the first three of Conditions \textbf{PB} hold. While conditions \emph{(i)} and \emph{(ii)} are satisfied by constructon of functions $F$, Condition \emph{(iii)} amounts to a verification very similar to the one we just did under (B), so that we will omit it.

  In the proof of assertions \emph{(b)} and \emph{(c)}, which follow easily from \emph{(a)}, we need to be careful to combine first, say, the upper bounds of all patches. Since their union is dense in $\II^n$, we can extend this function by continuity to get the desired extension to a quasi-copula, an upper bound of all possible solutions. The lower bound is obtained in an analogous way and all the solutions are lying between the two.
%
\end{proof}


\begin{center}
  \textsc{    Acknowledgement    }
\end{center}

The authors are thankful for many fruitful discussions on some themes of this paper to the following colleagues: David Dol\v{z}an, Peter Erich Klement, Susanne Saminger-Platz, and Nik Stopar. 
{The authors acknowledge financial support from the ARIS (Slovenian Research and Innovation Agency, research core funding No. P1-0222).}


\bibliographystyle{amsplain}

\end{document}